\newcommand{\E}{\mu}
\newcommand{\C}{\mathcal{C}}
\newcommand{\V}{\sigma^2}
\newcommand{\iE}{\E_{\varphi}}
\newcommand{\iV}{\V_{\varphi}}
\newcommand{\iiE}{\E_{\varphi^+}}
\newcommand{\iiV}{\V_{\varphi^+}}
\newtheorem{thm}{Theorem}[section]
\theoremstyle{defn}
\newtheorem{defn}{Definition}[section]
\newtheorem{rem}{Remark}[section]
\def\ni{\noindent}
\title{\sc Some Results on the $b$-Colouring Parameters of Graphs}
\author{N. K. Sudev}
\affil{\small Department of Mathematics\\ Vidya Academy of Science \& Technology \\ Thrissur - 680501, Kerala, India.\\ {\tt sudevnk@gmail.com,}}
\author{K. P. Chithra}
\affil{\small Naduvath Mana, Nandikkara\\ Thrissur-680301, Kerala, India.\\ {\tt chithrasudev@gmail.com}}
\author{Johan Kok}
\affil{\small Tshwane Metropolitan Police Department\\ City of Tshwane, Republic of South Africa \\ {\tt kokkiek2@tshwane.gov.za}}
\date{}
\begin{document}
\maketitle

\begin{abstract}
A vertex colouring of a given graph $G$ can be considered as a random experiment. A discrete random variable $X$, corresponding to this random experiment, can be defined as the colour of a randomly chosen vertex of $G$ and a probability mass function for this random variable can be defined accordingly. In this paper, we study the concepts of mean and variance corresponding to the $b$-colouring of $G$ and hence determine the values of these parameters for a number of standard graphs.
\end{abstract}

\ni {\bf Key Words}: Graph Colouring; colouring sum of graphs; colouring mean; colouring variance; $b$-chromatic mean; $b$-chromatic variance; $b^+$-chromatic mean; $b^+$-chromatic variance.  

\vspace{0.2cm}

\ni {\small \bf Mathematics Subject Classification}: 05C15, 62A01.

\section{Introduction}

For all  terms and definitions, not defined specifically in this paper, we refer to \cite{BM1,BLS,CZ, FH,EWW,DBW} and for the terminology of graph colouring, we refer to \cite{CZ1,JT1,MK1}.  For the concepts in Statistics, please see \cite{VS1,SMR1}. Unless mentioned otherwise, all graphs considered in this paper are simple, finite, connected and non-trivial.

A graph colouring is considered to be an assignment of colours, labels or weights to the elements of the graphs concerned. Unless mentioned otherwise, by graph colouring we mean the vertex colouring of the graph under consideration. For several decades, graph colouring problems have become an extremely useful models for many theoretical and practical problems.

A \textit{proper colouring} of a graph $G$, is a vertex colouring of it in such a way that no two adjacent vertices in $G$ have the same colour. The \textit{chromatic number} of a graph $G$, denoted by $\chi(G)$,  is the minimum number of colours required in a proper colouring of $G$. A proper $k$-colouring , denoted by $\C$ of a graph $G$ can be usually written as $\C = \{c_1,c_2, c_3, \ldots,c_k\}$ or equivalently $\C = \{1,2,3, \ldots,k\}$. A colour class with respect to a colour $c_i$ of $\C$ is the set of all vertices in $G$ having the colour $c_i$ and is denoted by $\C_i$. The \textit{strength} of a colour $c_i$ of in $\C$ of $G$ is the cardinality of its colour class $\C_i$ and is denoted by $\theta(c_i)$.

Note that a vertex colouring of a graph $G$ can be considered as a random experiment. Let $\C = \{c_1,c_2, c_3, \ldots,c_k\}$ be a proper $k$-colouring of $G$ and let $X$ be a discrete random variable (\textit{d.r.v}) which denotes the colour $c_i$ (or the subscript $i$ of the colour $c_i$) of a randomly selected vertex of $G$. Therefore, since the sum of all weights of colours of $G$ is the order of $G$, the real valued function $f(i)$ defined by 
\begin{equation}\label{eqn-1}
f(i)= 
\begin{cases}
\frac{\theta(c_i)}{|V(G)|}; &  x=1,2,3,\ldots,k\\
0; & \text{elsewhere}.
\end{cases}
\end{equation} 
is the \textit{probability mass function} (\textit{p.m.f}) of the \textit{d. r. v.} $X$. If the context is clear, we can say that $f(i)$ is the \textit{p.m.f} of the corresponding graph $G$ with respect to the given colouring $\C$.

Using the above terminology, the concepts of mean and variance of a random variable can be extended to the colouring mean and colouring variance of given graphs with respect to different types of colourings of graphs. In this paper, we study the colouring mean and variance of corresponding to the $b$-colouring of certain fundamental graph classes.

\section{$b$-Chromatic Mean and Variance of Graphs}

A \textit{$b$-colouring} of a graph $G$ is a colouring of the vertices of $G$ such that each colour class contains at least one vertex that has a neighbour in all other colour classes. The \textit{$b$-chromatic number} of a graph $G$, denoted by $\varphi(G)$, is the largest integer $k$ such that $G$ admits a $b$-colouring with $k$ colours (see \cite{IM1}).  

An important and relevant result on the bounds of $b$-chromatic number of a given graph $G$ is \begin{equation}\label{eqn-2}
\chi(G)\le\varphi(G)\le\Delta(G)+1. 
\end{equation}
where $\Delta(G)$ is the maximum degree of (the vertices) in $G$ (see \cite{KM1}).

Several interesting studies on the $b$-chromatic numbers and related parameters have been done (see \cite{EK1,KS1,KSC1,KM1,ES1,LS1,SSCK1,VI1,VI2,VV1}). Motivated by these studies, we extend the concepts of mean and variance of random variables to the $b$-colouring concepts of graphs as follows.

\begin{defn}{\rm
Let $\C = \{c_1,c_2, c_3, \ldots,c_k\}$ be a $b$-colouring of a given graph $G$ and $\theta(c_i)$ denotes the strength of a particular colour $c_i$ in the colouring $\C$ of $G$. Then, the \textit{$b$-colouring mean} of a given graph $G$ with respect to the colouring $\C$ is denoted by $\E_{\C}(X)$ and is defined to be 
\begin{equation}\label{eqn-3}
\E_{\C}(X) =\sum\limits_{i=1}^{n}i\,f(i)
\end{equation}
}\end{defn}

\begin{defn}{\rm
Let $\C = \{c_1,c_2, c_3, \ldots,c_k\}$ be a $b$-colouring of a given graph $G$ and $\theta(c_i)$ denotes the strength of a particular colour $c_i$ in the colouring $\C$ of $G$. The \textit{$b$-colouring variance} of a colouring $\C$ of a given graph $G$, denoted by $\V_{\C}(G)$, is defined as 
\begin{equation}\label{eqn-4}
\V_{\C}(X) =\sum\limits_{i=1}^{n}i^2\,f(i)-\left(\sum\limits_{i=1}^{n}i\,f(i)\right)^2 
\end{equation}	
}\end{defn}

It can be noted that we can have several $b$-colourings of a given graph $G$ using the same colours in the proper colouring $\C$. They differ only in the cardinality of different colour classes. It naturally raises some questions about the possible lower and upper bounds of the set of all such possible $b$-colouring means of the graph $G$ under consideration. Hence, we have the following definitions.

\begin{defn}{\rm 
A $b$-colouring mean of a graph $G$, with respect to a $b$-colouring $\C$ is said to be a {\em $b$-chromatic mean} or \textit{$\varphi$-chromatic mean} of $G$, if $\C$ is a $b$-colouring of $G$ such that the $b$-colouring mean is minimum. The $b$-chromatic mean of a graph $G$ is denoted by $\iE(G)$. }
\end{defn}

\begin{defn}{\rm 
The \textit{$b$-chromatic variance} of $G$, denoted by $\iV(G)$, is a $b$-colouring variance of $G$ with respect to a $b$-colouring $\C$ of $G$ which yields the $b$-chromatic mean.  }
\end{defn}

\begin{defn}{\rm 
A $b$-colouring mean of a graph $G$, with respect to a $b$-colouring $\C$ is said to be a {\em $b^+$-chromatic mean} or \textit{$\varphi^+$-chromatic mean} of $G$, if $\C$ is a $b$-colouring of $G$ such that the $b$-colouring mean is maximum. The $b^+$-chromatic mean of a graph $G$ is denoted by $\iiE(G)$. }
\end{defn}

\begin{defn}{\rm 
The \textit{$b^+$-chromatic variance} of $G$, denoted by $\iiV(G)$, is a $b$-colouring variance of $G$ with respect to a $b$-colouring $\C$ of $G$ which yields the $b^+$-chromatic mean.  }
\end{defn}

Note that the $\iE(G)$ is the minimum $b$-colouring mean and $\iiE(G)$ is the maximum $b$-colouring mean when we consider all possible $b$-colourings of a given graph $G$, using the same colours.

\begin{rem}{\rm 
Also note that the $b^+$-colouring parameters of a graph $G$ can be found out by reversing the colouring pattern of $G$ using the same $b$-colouring which provides $b$-chromatic parameters of $G$. }
\end{rem}

In the following section, we study these particular types of $b$-colouring means and variances of different fundamental graphs such as paths, cycles, complete graphs, bipartite graphs etc. 

\begin{rem}	{\rm 
As a proper colouring, any $b$-colouring of a complete graph $K_n$ must contain $n$ distinct colours such that every colour class is a singleton set. Hence, the corresponding \textit{p.m.f.} is 
$$f(i)=
\begin{cases}
\frac{1}{n}; & i=1,2,3,\ldots,n,\\
0; & \text{elsewhere}.
\end{cases}$$ . Hence, we can see that $b$-colouring of a complete graph follows discrete uniform distribution $DU(n)$. This $b$-colouring is also a $b^+$-colouring of $K_n$ as well. Therefore, $\iE(K_n)=\iiE(K_n)=\frac{n+1}{2}$ and $\iV(K_n)=\iiV(K_n)=\frac{n^2-1}{12}$.	}
\end{rem}

In the following theorem, we determine the \textit{p.m.f.} and hence the $\varphi$-chromatic mean and variance of a path $P_n$ on $n\ge 4$ vertices.

\begin{thm}\label{Thm-1}
The $b$-chromatic mean of a path $P_n$ is given by
\begin{equation*}
\iE(P_n)=
\begin{cases}
\frac{3}{2}; & \text{if $n=2$}\\
\frac{4}{3}; & \text{if $n=3$}\\
\frac{3n+2}{2n}; & \text{if $n\ge 4$ is even}\\
\frac{3n+3}{2n}; & \text{if $n>4$ is odd}
\end{cases}
\end{equation*}
and the $b$-chromatic variance of $P_n$ is 
\begin{equation*}
\iV(P_n)=
\begin{cases}
\frac{1}{4}; & \text{if $n=2$}\\
\frac{2}{9}; & \text{if $n=3$}\\
\frac{n^2+8n-4}{4n^2}; & \text{if $n\ge 4$ is even}\\
\frac{n^2+8n-9}{4n^2}; & \text{if $n>4$ is odd}
\end{cases}
\end{equation*}
\end{thm}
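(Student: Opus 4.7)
The argument falls naturally into two regimes: direct enumeration for $n\in\{2,3\}$, and an explicit construction plus optimality argument for $n\ge 4$.

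\textbf{Small cases.} For $n=2$ the only $b$-colouring is $\{c_1,c_2\}$ each used once, so $f$ is uniform on $\{1,2\}$ and~(\ref{eqn-3}),~(\ref{eqn-4}) give $\iE(P_2)=3/2$ and $\iV(P_2)=1/4$. For $n=3$ we have $\varphi(P_3)=2$, and the $b$-colouring minimising the mean places $c_1$ at the two endpoints and $c_2$ at the middle, giving $(\theta(c_1),\theta(c_2))=(2,1)$, $\iE(P_3)=4/3$, $\iV(P_3)=2/9$.

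\textbf{Construction for $n\ge 4$.} Since $\varphi(P_n)=3$, a $b$-colouring uses at most three colours. Minimising the mean requires putting as much weight as possible on colour~$c_1$ and as little as possible on $c_3$. The plan is to place $c_3$ at a single interior vertex $v_k$, forcing the local block $v_{k-2}v_{k-1}v_kv_{k+1}v_{k+2}=c_1c_2c_3c_1c_2$ (which simultaneously makes $v_k$ a witness for $c_3$, $v_{k-1}$ a witness for $c_2$, and $v_{k+1}$ a witness for $c_1$), and then to extend the colouring along the two remaining arms by alternating $c_1$ and $c_2$ so that the endpoints receive $c_1$. Choosing $k$ to match the parity of $n$ (for example, $k=3$ when $n$ is even) leads to
\[
(\theta(c_1),\theta(c_2),\theta(c_3))=\begin{cases}(n/2,\; n/2-1,\; 1) & n \text{ even},\\ ((n-1)/2,\; (n-1)/2,\; 1) & n \text{ odd.}\end{cases}
\]

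\textbf{Optimality and the formulas.} To see that no $b$-colouring does better, I would first observe that $\theta(c_1)\le\lceil n/2\rceil$ by independence. Next, $\theta(c_3)=1$ forces the unique class-$c_3$ vertex to be interior with distinct neighbours in $c_1$ and $c_2$; tracing the $b$-vertex requirements for $c_1$ and $c_2$ outward from this vertex then forces a second $c_2$ vertex, so $\theta(c_2)\ge 2$. In the odd case an additional check rules out $\theta(c_1)=(n+1)/2$, because that forces class $c_1$ to be exactly the odd-indexed vertices and then no even-indexed vertex has neighbours outside $c_1$, preventing classes $c_2$ and $c_3$ from having valid $b$-vertices simultaneously. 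Combining these constraints pins the triple above as the minimiser of $\theta(c_1)+2\theta(c_2)+3\theta(c_3)$. Substituting $f(i)=\theta(c_i)/n$ into~(\ref{eqn-3}) yields the claimed $\iE(P_n)$, and then $\iV(P_n)=\tfrac{1}{n}(\theta(c_1)+4\theta(c_2)+9\theta(c_3))-\iE(P_n)^2$ reduces by routine algebra to the stated expressions.

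\textbf{Main obstacle.} The principal difficulty is the optimality step: eliminating every competing feasible triple $(\theta(c_1),\theta(c_2),\theta(c_3))$ requires the structural analysis of $b$-vertices in $P_n$ sketched above, in particular the forced five-vertex block around the unique class-$c_3$ vertex when $\theta(c_3)=1$ and the parity-dependent obstruction to $\theta(c_1)=\lceil n/2\rceil$ in the odd case. Once feasibility is nailed down, the remaining derivations of $\iE(P_n)$ and $\iV(P_n)$ are purely algebraic.
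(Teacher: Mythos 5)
Your proposal follows essentially the same route as the paper: exhibit the extremal $b$-colouring with class sizes $(n/2,\,n/2-1,\,1)$ for even $n$ and $((n-1)/2,\,(n-1)/2,\,1)$ for odd $n$, then compute the mean and variance from $f(i)=\theta(c_i)/n$. Where you genuinely go beyond the paper is the optimality step: the paper simply asserts that one "can restrict" $c_3$ to a single vertex and give $c_1$ the maximum number of remaining vertices, with no argument that this minimises the mean over all $b$-colourings. Your observations --- that minimising $\sum i\,\theta(c_i)$ amounts to maximising $\theta(c_1)-\theta(c_3)$, that $\theta(c_1)\le\lceil n/2\rceil$ by independence of colour classes, that $\theta(c_3)\ge 1$, and that for odd $n$ the value $\theta(c_1)=(n+1)/2$ forces $c_1$ to be the unique maximum independent set and thereby kills the $b$-vertices of $c_2$ and $c_3$ --- are exactly what is needed to close that gap, and they are correct. (The auxiliary claim $\theta(c_2)\ge 2$ is not actually needed: once $\theta(c_1)$ and $\theta(c_3)$ are pinned, $\theta(c_2)$ is determined by the sum.) The algebra for $\iE$ and $\iV$ checks out in both parity cases.

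One concrete problem, which your write-up inherits from the theorem itself but which your own construction makes visible: the case $n=4$. Your five-vertex block $c_1c_2c_3c_1c_2$ requires $n\ge 5$, and indeed $P_4$ admits no $b$-colouring with three colours, since a $b$-vertex for each of three colours must have degree at least $2$ and $P_4$ has only two such vertices; hence $\varphi(P_4)=2$, not $3$. Consequently the stated value $\iE(P_4)=\tfrac{3\cdot4+2}{2\cdot4}=\tfrac{7}{4}$ is unattainable (a $2$-colour $b$-colouring of $P_4$ gives mean $\tfrac{3}{2}$ and variance $\tfrac{1}{4}$). If you present your proof, you should either restrict the even case to $n\ge 6$ and treat $n=4$ separately, or note explicitly that the theorem's formula fails at $n=4$; as written, the sentence ``Since $\varphi(P_n)=3$'' is false for $n=4$ and the subsequent construction does not apply there.
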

\begin{proof}
First note that the $b$ chromatic number of $P_n$ is $2$ if $n=2,3$ and is $3$ if $n\ge 4$. For $P_2$, both colour classes are singleton sets and hence the \textit{p.m.f.} $f(i)$ is given by $f(i)=\frac{1}{2}$ for $i=1,2$ and $0$ elsewhere. Therefore, by Equation \eqref{eqn-3}, $\iE(P_2)=\frac{3}{2}$ and by Equation \eqref{eqn-4}, $\iV(P_2)=\frac{1}{4}$. 

For $P_3$, two vertices have colour $c_1$ and one vertex has colour $c_2$ and hence the corresponding \textit{p.m.f.} is given by 
$$f(i)=
\begin{cases}
\frac{2}{3}; & i=1,\\
\frac{1}{3}; & i=2\\
0 & \text{elsewhere}.
\end{cases}$$
Therefore, $\iE(P_3)=\frac{4}{3}$ and $\iV(P_3)=\frac{2}{9}$.

For $n\ge 4$, we know that any $b$-colouring of $P_n$ contains $3$ colours, say $c_1,c_2,c_3$, in which we can restrict the occurrence of the colour $c_3$ to a single vertex and the colour $c_1$ to maximum number of remaining vertices. Then, we have to consider the following cases.

\textit{Case-1:} Let $n$ be an even integer. Then, we can consider a $b$-colouring such that the colour class of $c_1$ has $\frac{n}{2}$ vertices, the colour class of $c_2$ has $\frac{n-2}{2}$ vertices and that of $c_3$ is a singleton set. Therefore, the corresponding \textit{p.m.f.} is  
$$f(i)=
\begin{cases}
\frac{1}{2}; & i=1,\\
\frac{n-2}{2n}; & i=2\\
\frac{1}{n}; & i=3\\
0 & \text{elsewhere}.
\end{cases}$$

Therefore, $\iE(P_n)=\frac{3n+2}{2}$ and $\iV(P_3)=\frac{n^2+8n-4}{4n^2}$.

\textit{Case-2:} Let $n$ be an odd integer. Then, we have a $b$-colouring such that the colour classes of $c_1$ and $c_2$ have $\frac{n-1}{2}$ vertices and that of $c_3$ is a singleton set. Therefore, the corresponding \textit{p.m.f.} is  

$$f(i)=
\begin{cases}
\frac{n-1}{2}; & i=1,2\\
\frac{1}{n}; & i=3\\
0 & \text{elsewhere}.
\end{cases}$$

Therefore, $\iE(P_n)=\frac{3n+3}{2n}$ and $\iV(P_n)=\frac{n^2+8n-9}{4n^2}$.
\end{proof}

In a similar way, we can find the values of these parameters with respect to the $b$-colouring of cycles $C_n$ as follows.

\begin{thm}\label{Thm-2}
	The $b$-chromatic mean of a cycle  $C_n$ is given by
	\begin{equation*}
		\iE(C_n)=
		\begin{cases}
			\frac{3}{2}; & \text{if $n=4$}\\
			\frac{3n+6}{2n}; & \text{if $n$ is even, $n\ne 4$},\\
			\frac{3n+3}{2n}; & \text{if $n$ is odd}
		\end{cases}
	\end{equation*}
	and the $b$-chromatic variance of $C_n$ is 
	\begin{equation*}
		\iV(C_n)=
		\begin{cases}
			\frac{1}{4}; & \text{if $n=4$}\\
			\frac{n^2+16n+36}{4n^2}; & \text{if $n$ is even and $n\ne 4$}\\
			\frac{n^2+8n-9}{4n^2}; & \text{if $n$ is odd}.
		\end{cases}
	\end{equation*}
\end{thm}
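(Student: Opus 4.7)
The plan is to mirror the approach of Theorem~\ref{Thm-1}: recall $\varphi(C_n)$, exhibit a $b$-colouring that minimises the colouring mean, read off its $p.m.f.$, and substitute into \eqref{eqn-3}--\eqref{eqn-4}. The standard facts are $\varphi(C_4)=2$ and $\varphi(C_n)=3$ for every other $n\ge 3$ (the case $n=3$ is already covered by the $K_n$ remark, since $C_3=K_3$). For $n=4$ the only $b$-colouring (up to relabelling) alternates two colours, giving $f(1)=f(2)=\tfrac12$, whence $\iE(C_4)=\tfrac32$ and $\iV(C_4)=\tfrac14$.

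For $n\ne 4$, every $b$-colouring of $C_n$ partitions $V(C_n)$ into three non-empty classes of sizes $s_1\ge s_2\ge s_3$, and assigning the labels $1,2,3$ to the classes in that order minimises the mean, since $\sum_i i\,f(i)=1+(s_2+2s_3)/n$. So the task reduces to minimising $s_2+2s_3$ over realisable triples. Since the maximum independent set in $C_n$ has size $\lfloor n/2\rfloor$, each $s_i\le\lfloor n/2\rfloor$.

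For odd $n\ge 5$ this forces $s_2+s_3\ge (n+1)/2$, and the minimum value $(n+3)/2$ of $s_2+2s_3$ is attained at $(s_1,s_2,s_3)=\bigl(\tfrac{n-1}{2},\tfrac{n-1}{2},1\bigr)$, realised by the cyclic colouring $1,2,1,2,\ldots,1,2,3$; the vertices $v_1$, $v_{n-1}$ and $v_n$ are colour-dominating for colours $1$, $2$ and $3$ respectively. For even $n\ge 6$ two additional obstructions appear. (i)~A single colour-$3$ vertex would lie between a colour-$1$ and a colour-$2$ neighbour, leaving a path on $n-1$ vertices that must be properly $2$-coloured with differently coloured endpoints; this forces $n-2$ to be odd, which is impossible. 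Hence $s_3\ge 2$. (ii)~If $s_1=n/2$, then colour $1$ fills one part of the bipartition of $C_n$, and every colour-$2$ or colour-$3$ vertex has both neighbours coloured $1$, failing to be colour-dominating. Hence $s_1\le n/2-1$. Combined with $s_1\ge s_2\ge s_3\ge 2$ and $s_1+s_2+s_3=n$, these inequalities force $(s_1,s_2,s_3)=\bigl(\tfrac{n-2}{2},\tfrac{n-2}{2},2\bigr)$, which is realised by the cyclic colouring $1,2,3,1,2,1,2,\ldots,1,2,3$ with colour $3$ placed at $v_3$ and $v_n$.

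Once each distribution is fixed, the $p.m.f.$ is read off and substituted into \eqref{eqn-3} and \eqref{eqn-4}, yielding the stated formulas after routine arithmetic. The main obstacle is the lower-bound argument in the even case: both the parity block ruling out $s_3=1$ and the bipartition block ruling out $s_1=n/2$ must be in place before the optimal distribution is pinned down; once they are, the remaining work is purely computational.
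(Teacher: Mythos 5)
Your proposal is correct and follows the same basic route as the paper (exhibit a three-class $b$-colouring with class sizes $\bigl(\tfrac{n-1}{2},\tfrac{n-1}{2},1\bigr)$ for odd $n$ and $\bigl(\tfrac{n-2}{2},\tfrac{n-2}{2},2\bigr)$ for even $n\ne 4$, read off the \textit{p.m.f.}, substitute into \eqref{eqn-3}--\eqref{eqn-4}), but you supply something the paper only gestures at: a genuine minimality argument. The paper merely says that colour $c_3$ ``is assigned to exactly one or two vertices as per requirements'' without explaining why a singleton third class is impossible for even $n$, or why the exhibited distribution minimises the mean. Your reduction of the problem to minimising $s_2+2s_3$ over realisable ordered triples, the independent-set bound $s_i\le\lfloor n/2\rfloor$, the parity obstruction ruling out $s_3=1$ for even $n$, and the bipartition obstruction ruling out $s_1=n/2$ together close that gap cleanly; the constructions you give are easily checked to be valid $b$-colourings, so the extremal triples are attained.

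One caution about the final ``routine arithmetic'': for even $n\ne 4$ your distribution $f(1)=f(2)=\tfrac{n-2}{2n}$, $f(3)=\tfrac{2}{n}$ gives
\[
\iV(C_n)=\frac{5n+26}{2n}-\left(\frac{3n+6}{2n}\right)^2=\frac{n^2+16n-36}{4n^2},
\]
whereas the theorem (and the paper's proof) states $\frac{n^2+16n+36}{4n^2}$. The sign on the constant term in the paper is wrong: at $n=6$ the distribution is uniform on three colours, so the variance must be $\tfrac{2}{3}=\tfrac{96}{144}$, which matches $-36$ and not $+36$. So your argument is sound, but the arithmetic does not in fact ``yield the stated formula'' in that one case --- it yields the corrected one.
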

\begin{proof}
First note that the $b$ chromatic number of $C_4$ is $2$ and that of $C_n$ is $3$ for $n\ne 4$. For $C_4$, two vertices each have colours $c_1$ and $c_2$ respectively. Then, the corresponding \textit{p.m.f} is $f(i)=\frac{1}{2}$ for $i=1,2$ and $0$ elsewhere. Hence by \eqref{eqn-3} and \eqref{eqn-4}, we have $\iE(C_4)=\frac{3}{2}$ and $\iV(C_4)=\frac{1}{4}$. 

If $n\ne 4$, as mentioned in the proof previous theorem, we can find a $b$-colouring in which the colour $c_3$ is assigned to exactly one or two vertices of $C_n$ as per requirements. Here, we have to consider the following cases.

\textit{Case-1:} If $n$ is even, we have to assign colour $c_3$ to two vertices of $C_n$ and the colours $c_1$ and $c_2$ are assigned to the remaining vertices equally. Therefore, we have the corresponding \textit{p.m.f.} is given by 
$$f(i)=
\begin{cases}
\frac{n-2}{2n}; & i=1,2\\
\frac{2}{n}; & i=3\\
0 & \text{elsewhere}.
\end{cases}$$
Therefore, By \eqref{eqn-3} and \eqref{eqn-4}, we have, $\iE(C_n)=\frac{3n+6}{2n}$ and $\iV(C_n)=\frac{n^2+16n+36}{4n^2}$.

\textit{Case-1:} If $n$ is odd, the colour $c_3$ is to be assigned to exactly one vertex of $C_n$ and the colours $c_1$ and $c_2$ are assigned to $\frac{n-1}{2}$ vertices each. Therefore, we have the corresponding \textit{p.m.f.} is given by 
$$f(i)=
\begin{cases}
\frac{n-1}{2n}; & i=1,2\\
\frac{1}{n}; & i=3\\
0 & \text{elsewhere}.
\end{cases}$$
Therefore, By \eqref{eqn-3} and \eqref{eqn-4}, we have, $\iE(C_n)=\frac{3n+3}{2n}$ and $\iV(C_n)=\frac{n^2+8n-9}{4n^2}$.
\end{proof}

Next, the $b$-chromatic mean and variance of wheel graphs, defined by $W_{n+1}=C_n+K_1$, are determined in the following theorem.

\begin{thm}\label{Thm-3}
The $b$-chromatic mean of a wheel graph $W_{n+1}$ is given by
\begin{equation*}
\iE(W_{n+1})=
\begin{cases}
\frac{9}{5}; & \text{if $n=4$},\\
\frac{3n+14}{2n+2}; & \text{if $n$ is even, $n\ne 4$},\\
\frac{3n+11}{2n+2}; & \text{if $n$ is odd}
\end{cases}
\end{equation*}
and the $b$-chromatic variance of $C_n$ is 
\begin{equation*}
\iV(W_{n+1})=
\begin{cases}
\frac{14}{25}; & \text{if $n=4$},\\
\frac{n^2+42n-80}{4(n+1)^2}; & \text{if $n$ is even and $n\ne 4$},\\
\frac{n^2+34n-31}{4(n+1)^2}; & \text{if $n$ is odd}.
\end{cases}
\end{equation*}
\end{thm}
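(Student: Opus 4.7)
The plan is to mirror the three-step template of Theorems \ref{Thm-1} and \ref{Thm-2}: first identify $\varphi(W_{n+1})$, then pin down the $b$-colouring that realises the minimum $b$-colouring mean, and finally substitute the resulting p.m.f.\ into \eqref{eqn-3} and \eqref{eqn-4}.

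For the first step, observe that the hub of $W_{n+1}$ is adjacent to every cycle vertex, so in any $b$-colouring it must form a singleton colour class, while each cycle vertex has degree $3$ and can serve as a $b$-vertex only when the colouring uses at most $4$ colours; combined with \eqref{eqn-2} this gives $\varphi(W_{n+1}) \leq 4$. A short case analysis of the proper $3$-colourings of $C_4$ shows that no $b$-colouring of $W_5$ attains $4$ colours (each candidate leaves some colour class without a $b$-vertex), so $\varphi(W_5) = 3$. For $n \geq 5$, the pattern $1,2,3,1,2,3,\ldots$ on the cycle (with a small adjustment near the wrap-around when $n$ is odd) together with the hub coloured $4$ produces a valid $b$-colouring, establishing $\varphi(W_{n+1}) = 4$.

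The crux of the argument is to show that, among all $b$-colourings that assign the hub the highest colour, the distribution $(k_1,k_2,k_3)$ of cycle vertices over the cycle colours minimising $k_1 + 2k_2 + 3k_3$ is exactly $\bigl(\tfrac{n-2}{2},\tfrac{n-2}{2},2\bigr)$ when $n \geq 6$ is even and $\bigl(\tfrac{n-1}{2},\tfrac{n-1}{2},1\bigr)$ when $n \geq 5$ is odd. The idea is to decompose $C_n$ into the maximal arcs obtained by removing the $k_3$ colour-$3$ vertices: each arc alternates between colours $1$ and $2$, and since the required $b$-vertices for cycle colours $1$ and $2$ need a colour-$3$ cycle-neighbour, both colours $1$ and $2$ must actually appear among the cycle-neighbours of colour-$3$ vertices. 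The main obstacle is ruling out $k_3 = 1$ when $n$ is even: then the single arc has length $n-1$ and a parity argument on its alternating pattern forces the two cycle-neighbours of the unique colour-$3$ vertex to carry the same colour, so that vertex fails to be a $b$-vertex. A similar case analysis on arc lengths rules out $k_2 < (n-2)/2$ for even $n$ and $k_2 < (n-1)/2$ for odd $n$, pinning down the claimed minimiser.

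With the minimising distribution fixed, the p.m.f.\ is explicit; for example, when $n \geq 6$ is even one has $f(1)=f(2)=\tfrac{n-2}{2(n+1)}$, $f(3)=\tfrac{2}{n+1}$ and $f(4)=\tfrac{1}{n+1}$, and the odd and $n=4$ cases are analogous. Substituting into \eqref{eqn-3} and \eqref{eqn-4} and simplifying — a routine but somewhat tedious computation — then yields the stated closed forms for $\iE(W_{n+1})$ and $\iV(W_{n+1})$ in all three regimes.
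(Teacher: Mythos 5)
Your proposal follows essentially the same route as the paper's proof: determine $\varphi(W_5)=3$ and $\varphi(W_{n+1})=4$ for $n\ne 4$, exhibit the extremal $b$-colouring in each parity case, and substitute the resulting p.m.f.\ into \eqref{eqn-3} and \eqref{eqn-4}; your distributions $\bigl(\tfrac{n-2}{2},\tfrac{n-2}{2},2,1\bigr)$ and $\bigl(\tfrac{n-1}{2},\tfrac{n-1}{2},1,1\bigr)$ and the resulting formulas agree with the paper's. The only substantive difference is that you also justify \emph{minimality} of these distributions (the parity argument ruling out a single colour-$3$ vertex on an even cycle, and the bound on $k_1$), a step the paper omits entirely by merely asserting that a suitable $b$-colouring "can be found"; this is a welcome strengthening rather than a divergence in method.
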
 
\begin{proof}
The $b$ chromatic number of $W_5$ is $3$ and that of $W_{n+1}$ is $4$ for $n\ne 4$. For $W_5$, two vertices each have colours $c_1$ and $c_2$ respectively and the central vertex has the colour $c_3$. Then, the corresponding \textit{p.m.f} is 
$$f(i)=
\begin{cases}
\frac{2}{5}; & i=1,2,\\
\frac{1}{5}; & i=3,\\
0; \text{elsewhere}.
\end{cases}$$. 
Hence, by \eqref{eqn-3} and \eqref{eqn-4}, we have $\iE(W_5)=\frac{9}{5}$ and $\iV(W_5)=\frac{14}{25}$.

\vspace{0.25cm}

Next, let $n\ne 4$. Then, the $b$- chromatic number of $W_{n+1}$ is $4$. If $n$ is even, we can find a $b$-colouring such that $\frac{n-2}{2}$ vertices have colours $c_1$ and $c_2$ each and two vertices have colour $c_3$ and one vertex has colour $c_4$. Hence, the corresponding \textit{p.m.f} is given by 
\begin{equation*}
f(i)=
\begin{cases}
\frac{n-2}{2(n+1)}; & i=1,2,\\
\frac{2}{n+1}; & i=3,\\
\frac{1}{n+1}; & i=4,\\
0; & \text{elsewhere}.
\end{cases}
\end{equation*} 
Hence, we have $\iE(W_{n+1})=\frac{3n+14}{2n+2}$ and $\iV(W_{n+1})=\frac{n^2+42n-80}{4(n+1)^2}$.

\vspace{0.25cm}

If $n$ is odd, we can find a $b$-colouring such that $\frac{n-1}{2}$ vertices have colours $c_1$ and $c_2$ each and one vertex each has colour $c_3$ and $c_4$. Hence, the corresponding \textit{p.m.f} is given by 
\begin{equation*}
f(i)=
\begin{cases}
\frac{n-1}{2(n+1)}; & i=1,2,\\
\frac{1}{n+1}; & i=3,4\\
0; & \text{elsewhere}.
\end{cases}
\end{equation*} 
Hence, we have $\iE(W_{n+1})=\frac{3n+11}{2n+2}$ and $\iV(W_{n+1})=\frac{n^2+34n-31}{4(n+1)^2}$.
\end{proof}

Another cycle related graph that catches attention in this context is a sunlet graph $S_n$ which is defined by $S_n=C_n\odot K_1$, where $\odot$ represent the corona product of two graphs (see \cite{FH} for the definition of corona of two graphs). In the following theorem, we determine the $b$-chromatic mean and variance of sunlet graphs.

\begin{thm}\label{Thm-4}
The $b$-chromatic mean of a sunlet graph is 
$$\iE(S_n)=
\begin{cases}
\frac{5}{3}; & n=3,\\
\frac{5}{2}; & n=4,\\
\frac{17}{10}; & n=5,\\
\frac{3n+7}{2n}; & n\ge 6
\end{cases}$$
and the $b$-chromatic variance of the sunlet graph $S_n$ is 
$$\iV(S_n)=
\begin{cases}
\frac{5}{9}; & n=3,\\
\frac{5}{4}; & n=4,\\
\frac{61}{100}; & n=5,\\
\frac{n^2+35n-49}{4n^2}; & n\ge 6.
\end{cases}$$
\end{thm}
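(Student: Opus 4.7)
The plan is to split the four stated cases of $n$, each requiring a different $b$-colouring construction. Since $\Delta(S_n)=3$ (attained at every cycle vertex, which has two cycle neighbours and one pendant), and since every b-dominating vertex of a b-colouring with $k$ colours must have degree at least $k-1$, the inequality (\ref{eqn-2}) gives $\varphi(S_n)\le 4$, and only the $n$ cycle vertices of $S_n$ are eligible to serve as b-dominating vertices when $k=4$. I would begin each case by pinning down $\varphi(S_n)$, and then exhibit an explicit b-colouring whose colour-class sizes realise the minimum mean; the \textit{p.m.f.} and the values of $\iE(S_n),\iV(S_n)$ then follow by direct substitution into (\ref{eqn-3}) and (\ref{eqn-4}).

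For the small cases $n=3,4,5$, I would argue by direct case analysis. For $S_3$, only three cycle vertices exist, so $\varphi(S_3)\le 3$; the minimum-mean assignment uses cycle colours $1,2,3$ and pendants $2,1,1$ to obtain distribution $(3,2,1)$. For $S_4$ I would exhibit a $4$-colouring (essentially unique up to symmetry) in which the cycle is coloured $1,2,3,4$ and each pendant $v_i$ receives the colour diametrically opposite around the $4$-cycle, forcing the symmetric distribution $(2,2,2,2)$. For $S_5$ I would show that no b-colouring with four colours exists by a short case check on which cycle vertex carries the repeated colour (the forced pendant colours always clash with the b-dominating requirement for some class), so $\varphi(S_5)=3$; the cycle is then coloured $1,2,1,2,3$ and the pendants are chosen to yield distribution $(5,3,2)$.

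For $n\ge 6$ I would construct a b-colouring with $\varphi(S_n)=4$ by colouring four consecutive cycle vertices $u_1,\ldots,u_4$ with the four distinct colours $1,2,3,4$, extending to $u_5,\ldots,u_n$ by alternating $1$ and $2$ (with one adjustment at the wrap-around to avoid a monochromatic edge). The pendants $v_2,v_3,v_4,v_5$ are then forced, in order to make $u_2,u_3,u_4$ and the colour-$1$ vertex $u_5$ b-dominating for their respective classes; every remaining pendant is assigned the smallest colour compatible with its adjacency constraint, so that the distribution is skewed toward colour $1$ and toward colour $2$ as much as possible. Once the four colour-class sizes $\theta(c_1),\theta(c_2),\theta(c_3),\theta(c_4)$ are read off, the stated formulas for $\iE(S_n)$ and $\iV(S_n)$ reduce to an algebraic identity.

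The main obstacle is in two places: first, justifying that the distribution constructed is actually minimum, which requires a short argument that any b-colouring of $S_n$ with four colours must force at least one pendant into each of the classes of colours $3$ and $4$ to satisfy b-dominance, so the contribution of the ``heavy'' colours to $\sum i\,\theta(c_i)$ cannot be lowered below the claimed value; and second, the transition at $n=5$, where the unique cyclic collision in any proposed $4$-colouring obstructs b-dominance and drops $\varphi$ to $3$, producing a distribution incompatible with the general formula for $n\ge 6$. Once the colouring is in hand, the remaining work is routine substitution.
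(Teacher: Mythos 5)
Your overall strategy (pin down $\varphi(S_n)$, exhibit an explicit $b$-colouring, read off the \textit{p.m.f.} and substitute into \eqref{eqn-3} and \eqref{eqn-4}) is the same as the paper's, and your treatment of $n=3,4,5$ matches the paper's distributions $(3,2,1)$, $(2,2,2,2)$, $(5,3,2)$. The paper, however, never writes down an explicit colouring for $n\ge 6$; it simply asserts the class sizes $(n-1,\,n-3,\,2,\,2)$. Your attempt to make this explicit is where the proof breaks: the colouring you describe does \emph{not} realise those class sizes. Take $n=8$: cycle $u_1,\dots,u_8$ coloured $1,2,3,4,1,2,1,2$, forced pendants $v_2=4$, $v_3=1$, $v_4=2$, $v_5=3$, and the remaining pendants given the smallest compatible colour ($v_1=2$, $v_6=1$, $v_7=2$, $v_8=1$). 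The resulting classes have sizes $(6,6,2,2)$ and mean $2$, strictly larger than the required $\frac{3n+7}{2n}=\frac{31}{16}$. The defect is structural: by placing colours $3$ and $4$ on single cycle vertices $u_3,u_4$, you force the pendants of the $b$-dominating vertices $u_2$ and $u_4$ into classes $4$ and $2$, and you leave the cycle split evenly between colours $1$ and $2$, so class $1$ can never exceed $n-2$. To reach $(n-1,n-3,2,2)$ one must give colours $3$ and $4$ \emph{two cycle vertices each and no pendants} — e.g.\ colour a segment $4,1,3,4,2,3$ and fill the rest of the cycle with $1$ and $2$ — so that the $b$-dominating vertices of colours $1$ and $2$ see colours $3$ and $4$ along cycle edges and all four forced pendants land in classes $1$ and $2$; for $n=8$ the cycle colouring $4,1,3,4,2,3,1,2$ with pendants $1,2,2,1,1,1,2,1$ gives $(7,5,2,2)$ as required.

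Two further points. First, your proposed minimality lemma — that every $4$-colour $b$-colouring of $S_n$ must force at least one pendant into each of classes $3$ and $4$ — is false: in the optimal colouring just described, classes $3$ and $4$ contain no pendants at all. The correct lower-bound argument is different (one shows classes $3$ and $4$ each need at least two vertices because a singleton class would require its unique vertex to be simultaneously cycle-adjacent to too many $b$-dominating vertices, and then that class $1$ cannot exceed $n-1$); the paper omits this direction entirely, so you are not worse off than the paper here, but the lemma as you state it cannot be proved. Second, your closing claim that the stated formulas ``reduce to an algebraic identity'' fails for the variance: substituting the \textit{p.m.f.} $\left(\frac{n-1}{2n},\frac{n-3}{2n},\frac{1}{n},\frac{1}{n}\right)$ into \eqref{eqn-4} gives $\frac{n^2+32n-49}{4n^2}$, not the printed $\frac{n^2+35n-49}{4n^2}$ (and indeed no integer class sizes with mean $\frac{3n+7}{2n}$ can produce the printed value), so this discrepancy in the statement would have surfaced had the substitution actually been carried out.
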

\begin{proof}
For $n=3$, a $b$-colouring of $S_n$ has $3$ colours. We can find a $b$-colouring such that $3$ vertices have colour $c_1$, $2$ vertices have colour $c_2$ and $1$ vertex has the colour $c_3$. Then, the corresponding \textit{p.m.f.} is given by
$$f(i)=
\begin{cases}
\frac{1}{2}; & i=1,\\
\frac{1}{3}; & i=2,\\
\frac{1}{6}; & i=3,\\
0; & \text{elsewhere}.
\end{cases}$$
Hence, we have $\iE(S_3)=\frac{5}{3}$ and $\iV(S_3)=\frac{5}{9}$.

\vspace{0.25cm}

A $b$-colouring of $S_4$ has $4$ colours. We can find a $b$-colouring such that $2$ vertices have colours $c_1,c_2,c_3$ and $c_4$. Then, the corresponding \textit{p.m.f.} is given by
$$f(i)=
\begin{cases}
\frac{1}{4}; & i=1,2,3,4\\
0; & \text{elsewhere}.
\end{cases}$$
Hence, we have $\iE(S_4)=\frac{5}{2}$ and $\iV(S_4)=\frac{5}{4}$.

\vspace{0.25cm}

A $b$-colouring of $S_5$ has only $3$ colours. We can find a $b$-colouring such that $5$ vertices have colour $c_1$, $3$ vertices have colour $c_2$ and $2$ vertices have colour $c_3$. Then, the corresponding \textit{p.m.f.} is given by
$$f(i)=
\begin{cases}
\frac{1}{2}; & i=1,\\
\frac{3}{10}; & i=2,\\
\frac{1}{5}; & i=3,\\
0; & \text{elsewhere}.
\end{cases}$$
Hence, we have $\iE(S_5)=\frac{17}{10}$ and $\iV(S_5)=\frac{61}{100}$.

\vspace{0.25cm}

If $n\ge 6$, then we can find a $b$-colouring of $S_n$ containing $4$ colours in such a way that $n-1$ vertices have colour $c_1$, $n-3$ vertices have colour $c_2$ and two vertices each have colours $c_3$ and $c_4$. Then, the corresponding \textit{p.m.f.} is 
$$f(i)=
\begin{cases}
\frac{n-1}{2n}; & i=1,\\
\frac{n-3}{2n}; & i=2,\\
\frac{1}{n}; & i=3,4,\\
0; & \text{elsewhere}.
\end{cases}$$
Therefore, we have $\iE(S_n)=\frac{3n+7}{2n}$ and $\iV(S_n)=\frac{n^2+35n-49}{4n^2}$.
\end{proof}

Another cycle related graph, we consider in  this context is a closed ladder $CL_n$, which is a graph obtained by joining every pair of the corresponding vertices of two cycles by an edge. That is, $CL_n=\C_n\Box P_2$. The following theorem discusses the $b$-chromatic mean and variance of a closed ladder graph.

\begin{thm}\label{Thm-5}
The $b$-chromatic mean of a closed ladder $CL_n$ graph is 
	$$\iE(CL_n)=
	\begin{cases}
	5; & n=3,\\
	\frac{5}{2}; & n=4,\\
	\frac{23}{10}; & n=5,\\
	\frac{23}{12}; & n=6,\\
	\frac{3n+8}{2n}; & n\ge 7, \text{$n$ is odd}\\
	\frac{3n+7}{2n}; & n\ge 7, \text{$n$ is even}.
	\end{cases}$$
	and the $b$-chromatic variance of graph $CL_n$ is 
	$$\iV(CL_n)=
	\begin{cases}
	2; & n=3,\\
	\frac{5}{4}; & n=4,\\
	\frac{131}{144}; & n=5,\\
	\frac{n^2+28n-64}{4n^2}; & n\ge 7, \text{$n$ is odd}\\
	\frac{n^2+24n-49}{4n^2}; & n\ge 7, \text{$n$ is even}.
	\end{cases}$$
\end{thm}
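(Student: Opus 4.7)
The plan is to mirror the template established by Theorems \ref{Thm-1}--\ref{Thm-4}: first determine $\varphi(CL_n)$, then exhibit a valid $b$-colouring that minimises the $b$-colouring mean, read off its pmf $f(i)$, and finally substitute into \eqref{eqn-3} and \eqref{eqn-4} to obtain $\iE(CL_n)$ and $\iV(CL_n)$. Since $CL_n = C_n \Box P_2$ is $3$-regular on $2n$ vertices, the bound \eqref{eqn-2} gives $\varphi(CL_n)\le 4$. I would label the vertices as $u_1,\ldots,u_n$ and $v_1,\ldots,v_n$ with rung edges $u_iv_i$ and cycle edges $u_iu_{i+1}$, $v_iv_{i+1}$ (indices taken modulo $n$), and split the argument along the five listed subcases.

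For the small values $n\in\{3,4,5,6\}$ the graph is too small to admit the generic construction used for the larger sizes, so I would handle each size by inspection: compute $\varphi(CL_n)$ directly, exhibit an extremal $b$-colouring on $2n$ vertices, and evaluate the resulting pmf. For $n\ge 7$ I would treat the two parities separately and construct an explicit $4$-colour $b$-colouring in each case. When $n$ is even, place two vertices in one sparse colour class and one vertex in the other, filling the remaining $2n-3$ vertices with $c_1,c_2$ so that $|\C_1|$ is as large as possible; when $n$ is odd, use slightly different class sizes to absorb the parity obstruction on the two cycles. In both cases the rare colours are positioned so that every colour class contains a $b$-vertex, after which the pmfs match those implicit in the stated formulas and the closed-form expressions for $\iE(CL_n)$ and $\iV(CL_n)$ fall out by routine arithmetic.

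The main obstacle is a combination of two rigidity constraints that the construction must satisfy simultaneously. First, in a $3$-regular graph a $b$-vertex for a $4$-colouring must see the three remaining colours appearing exactly once on its three neighbours, so the sparse colours $c_3,c_4$ are severely constrained in where they may be placed. Second, to justify that the constructed colouring actually attains the \emph{minimum} mean, one must rule out any competing $4$-colour $b$-colouring yielding a more skewed distribution; this reduces to bounding $|\C_1|$ from above (essentially via the maximum independent set of $CL_n$ that is compatible with the $b$-vertex condition) and then bounding $|\C_2|$ subject to the first bound. These optimality steps, together with locating valid $b$-vertices in the odd-$n$ case where the cyclic parity is awkward, are where the proof requires the most care; once they are in place, the pmf computations and the resulting values of mean and variance reduce to arithmetic substitution.
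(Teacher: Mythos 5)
Your overall strategy is the same one the paper uses: fix $\varphi(CL_n)$, exhibit an extremal $b$-colouring, read off the pmf, and substitute into \eqref{eqn-3} and \eqref{eqn-4}. But the proposal stops short of the actual content of the proof: for none of the five cases do you write down the colour class sizes, so the pmfs --- which are the entire substance of the result --- are never produced, and the claim that they ``match those implicit in the stated formulas'' is exactly what needs to be demonstrated. Worse, the one concrete commitment you do make is inconsistent with the theorem. For $n\ge 7$ even you propose sparse classes of sizes $2$ and $1$ with the remaining $2n-3$ vertices split between $c_1$ and $c_2$; but the value $\iE(CL_n)=\frac{3n+7}{2n}$ corresponds to the distribution $\theta(c_1)=\theta(c_2)=n-2$, $\theta(c_3)=3$, $\theta(c_4)=1$ (i.e.\ $f(3)=\frac{3}{2n}$, $f(4)=\frac{1}{2n}$), so your class sizes would yield a different mean. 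Indeed, the $b$-vertex analysis you correctly sketch (each of the three neighbours of the unique $c_4$-vertex must itself be a $b$-vertex in a $3$-regular graph) tends to force a third $c_3$-vertex, which is presumably why the intended colouring has $|\C_3|=3$; you would need to either carry that analysis to its conclusion or exhibit the $(n-2,n-2,3,1)$ colouring explicitly. Similarly, for $n\ge 7$ odd the target distribution is $(n-2,\,n-3,\,4,\,1)$, which ``slightly different class sizes'' does not pin down.

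The optimality step you flag --- ruling out a $b$-colouring with a more skewed distribution, via an upper bound on $|\C_1|$ compatible with the $b$-vertex condition --- is genuinely necessary for the word ``minimum'' in Definition 2.3, and you are right that it is the delicate part; but you do not supply it, so it remains a gap in the proposal (for what it is worth, the paper's own proof also omits it and simply asserts the colourings). To repair the proposal you must, at minimum, (i) give the explicit class sizes and verify the $b$-vertex condition for each of the six subcases, matching the pmfs forced by the stated formulas, and (ii) either prove or at least carefully argue the minimality of the resulting means.
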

\begin{proof}
If $n=3$, we can find a $b$-colouring of $CL_3$ containing $3$ colours such that the corresponding \textit{p.m.f.} is
$$f(i)=
\begin{cases}
\frac{1}{3}; & i=1,2,3,\\
0; & \text{elsewhere}.
\end{cases}$$
Hence, we have $\iE(CL_3)=2$ and $\iV(CL_3)=2$. 

If $n=4$, we can find a $b$-colouring of $CL_4$ containing $4$ colours such that the corresponding \textit{p.m.f.} is
$$f(i)=
\begin{cases}
\frac{1}{4}; & i=1,2,3,4\\
0; & \text{elsewhere}.
\end{cases}$$
Hence, we have $\iE(CL_4)=\frac{5}{2}$ and $\iV(CL_n)=\frac{5}{4}$.

We can find a $b$-colouring of $CL_5$ containing $4$ colours such that the corresponding \textit{p.m.f.} is
$$f(i)=
\begin{cases}
\frac{3}{10}; & i=1,2,\\
\frac{1}{5}; & i=3,4\\
0; & \text{elsewhere}.
\end{cases}$$
Hence, we have $\iE(CL_5)=\frac{23}{10}$ and $\iV(CL_5)=\frac{121}{100}$.
 
We can find a $b$-colouring of $CL_6$ containing $4$ colours such that the corresponding \textit{p.m.f.} is
$$f(i)=
\begin{cases}
\frac{5}{12}; & i=1,\\
\frac{1}{3}; & i=2,\\
\frac{1}{6}; & i=3,\\
\frac{1}{12}; & i=4,\\
0; & \text{elsewhere}.
\end{cases}$$
Hence, we have $\iE(CL_6)=\frac{23}{12}$ and $\iV(CL_6)=\frac{131}{144}$.

Let $n\ge 7$. If $n$ is odd, there exist a $b$-colouring for $CL_n$ such that the corresponding \textit{p.m.f.} is given by  
$$f(i)=
\begin{cases}
\frac{n-2}{2n}; & i=1,\\
\frac{n-3}{2n}; & i=2,\\
\frac{2}{n}; & i=3,\\
\frac{1}{2n2}; & i=4,\\
0; & \text{elsewhere}.
\end{cases}$$
Therefore, we have $\iE(CL_n)=\frac{3n+8}{2n}$ and $\iV(CL_n)=\frac{n^2+28n-64}{4n^2}$.

If $n$ is even, there exist a $b$-colouring for $CL_n$ such that the corresponding \textit{p.m.f.} is given by  
$$f(i)=
\begin{cases}
\frac{n-2}{2n}; & i=1,2\\
\frac{3}{2n}; & i=3,\\
\frac{1}{2n2}; & i=4,\\
0; & \text{elsewhere}.
\end{cases}$$
Therefore, we have $\iE(CL_n)=\frac{3n+7}{2n}$ and $\iV(CL_n)=\frac{n^2+24n-49}{4n^2}$.
\end{proof}

\section{Conclusion}

In this paper, we extended two important statistical parameters to the concepts of $b$-colouring of certain fundamental graphs and determined their values for certain graphs and graph classes. More problems in this area are still open. 

Determining the sum, mean and variance corresponding to the $b$-colouring of certain generalised graphs like generalised Petersen graphs, are some of the open problems which seem to be promising for further investigations. Studies on the sum, mean and variance corresponding to different types of edge colourings, map colourings, total colourings etc. of graphs also offer much for future studies.

Different colouring parameters can be used to model important problems in project management, communication networks, optimisation techniques, distribution networks etc. and in several industrial situations. 

We can associate many other parameters to graph colouring and other notions like covering, matching etc.  All these facts highlight a wide scope for future studies in this area.

\section*{Acknowledgement}

The first author of this article would like to dedicate this paper to the memory Prof. (Dr.) D. Balakrishnan, Founder Academic Director, Vidya Academy of Science and Technology, Thrissur, India., who had been his mentor, the philosopher and the role model in teaching and research.

\end{document}